\documentclass[a4paper]{article}
\pdfoutput=1

\usepackage{fullpage}
\usepackage[T1]{fontenc}
\usepackage{graphicx}
\usepackage{amsmath,amssymb,amsthm}
\usepackage{mathtools}
\usepackage{microtype}
\usepackage[colorlinks = true, linkcolor = black, citecolor = black]{hyperref}
\usepackage{cleveref}
\usepackage{algorithm}
\usepackage{algorithmicx}
\usepackage{algpseudocode}
\usepackage{subcaption}
\usepackage[group-separator={,}]{siunitx}
\algnewcommand\algorithmicinput{\textbf{Input:}}
\algnewcommand\Input{\item[\algorithmicinput]}
\algnewcommand\algorithmicoutput{\textbf{Output:}}
\algnewcommand\Output{\item[\algorithmicoutput]}

\newcommand{\R}{\mathbb{R}}
\newcommand{\bA}{\mathbf{A}}

\DeclareMathOperator{\GL}{GL}

\DeclareMathOperator{\err}{err}

\DeclarePairedDelimiter{\norm}{\lVert}{\rVert}

\newtheorem{theorem}{Theorem}[section]
\newtheorem{lemma}[theorem]{Lemma}
\newtheorem{proposition}[theorem]{Proposition}
\theoremstyle{definition}

\newtheorem{remark}[theorem]{Remark}
\newtheorem*{remark*}{Remark}

\numberwithin{equation}{section}

\title{Numerically stable variants of overrelaxation for operator Sinkhorn iteration}
\author{Henrik Eisenmann\thanks{Institut f\"ur Geometrie und Praktische Mathematik, RWTH Aachen University, 52062 Aachen, Germany} \and Tasuku Soma\thanks{The Institute of Statistical Mathematics, Tokyo, 190-8562, Japan} \and Xun Tang\thanks{Department of Mathematics, Stanford University, Stanford, CA,
94305, USA} \and Andr\'e Uschmajew\thanks{Institute of Mathematics \& Centre for Advanced Analytics and Predictive Sciences, University of Augsburg, 86159 Augsburg, Germany}}
\date{}

\begin{document}

\maketitle
\begin{abstract}
We consider accelerated versions of the operator Sinkhorn iteration (OSI) for solving scaling problems for completely positive maps. Based on the interpretation of OSI as alternating fixed point iteration, it has been recently proposed to achieve acceleration by means of nonlinear successive overrelaxation (SOR), e.g.~with respect to geodesics in Hilbert metric. The direct implementation of the proposed SOR algorithms, however, can be numerically unstable for ill-conditioned instances, limiting the achievable accuracy. Here we derive equivalent versions of OSI with SOR where, similar to the original OSI formulation, scalings are applied on the fly in order to take advantage of preconditioning effects. Numerical experiments confirm that this modification allows for numerically stable SOR-acceleration of OSI even in ill-conditioned cases.
\end{abstract}

\section{Introduction}

Given matrices $A_1, \dots, A_k \in \R^{m \times n}$, the \emph{operator scaling} problem asks for finding invertible matrices $L \in \GL_m(\R)$ and $R \in \GL_n(\R)$ such that the \emph{scaled} matrices
\[
\bar A_i = L A_i R^\top
\]
satisfy the two conditions
\begin{equation}\label{eq: OS}
\begin{aligned}
    \sum_{i=1}^k \bar A_i^{} \bar A_i^\top  &= \sum_{i=1}^k  L A_i^{} R^\top R  A_i^\top L^\top =  \frac{1}{m}I_m ,\\
    \sum_{i=1}^k \bar A_i^\top \bar A_i^{} &= \sum_{i=1}^k R A_i^\top L^\top L A_i^{} R^\top = \frac{1}{n}I_n.
\end{aligned}
\end{equation}
This problem has several theoretical and practical applications in different areas, including in classical and quantum complexity~\cite{Gurvits2004, Garg2019, Allen-Zhu2018}, functional analysis~\cite{Garg2018,Sra2018}, covariance and scatter estimation~\cite{Dutilleul1999,Franks2020,Drton2021}, and signal processing~\cite{Barthe1998,Kwok2018}. Notably, operator scaling can be regarded as a quite far-reaching generalization of the famous matrix scaling problem. The operator scaling problem can in principle also be posed in a complex setting, in which case one asks the sums of the Hermitian matrices $\bar A_i^{} \bar A_i^*$ and $\bar A_i^* \bar A_i^{}$ to be multiples of identity. However, in this work we restrict to the real case.

A particular fruitful interpretation of the operator scaling problem, in particular with regard to the theoretical analysis, is its interpretation as a fixed point problem on cones of positive definite matrices. Let $\mathcal C_m = \{ X \in \mathbb R^{m \times m} \colon X^T = X, X \succ 0\}$ denote the cone of symmetric positive definite matrices in~$\mathbb R^{m \times m}$. Since every $X \in \mathcal C_m$ and $Y \in \mathcal C_n$ can be written as
\[
X = L^\top L, \qquad Y = R^\top R
\]
with $L \in \GL_m(\R)$ and $R \in \GL_n(\R)$, the equations in~\eqref{eq: OS} are equivalent to
\begin{equation}\label{eq: fp formulation}
\begin{aligned}
 X &= \frac{1}{m} \left( \sum_{i=1}^k A_i^{} Y A_i^\top \right)^{-1}, \\
 Y &= \frac{1}{n} \left( \sum_{i=1}^k A_i^\top X A_i^{} \right)^{-1}
 \end{aligned}
\end{equation}
for some $X \in \mathcal C_m$ and $Y \in \mathcal  C_n$. This is a nonlinear fixed point equation on the product cone $\mathcal C_m \times \mathcal C_n$, and the operator scaling problems asks for finding a fixed point. The linear map
\[
\Phi: \mathbb R^{n \times n} \to \mathbb R^{m \times m}, \qquad \Phi(Y) \coloneqq \sum_{i=1}^k A_i^{} Y A_i^\top,
\]
appearing in the first equation is called the \emph{completely positive (CP) map} associated with the matrices $A_1,\dots,A_k$, because it maps the cone of symmetric positive semidefinite matrices in $\mathbb R^{n \times n}$ into the cone of symmetric positive semidefinite matrices in $\mathbb R^{m \times m}$. The map $\Phi^*(X) = \sum_{i=1}^k A_i^\top X A_i^{}$ in the second equation is its adjoint.

\subsection*{The operator Sinkhorn iteration}

The fixed point formulation~\eqref{eq: fp formulation} suggests the following alternating fixed point iteration for finding a solution to the problem: starting from $Y_0 \in \mathcal C_n$, usually $Y_0 = I_n$, compute recursively the sequences
\begin{equation}\label{eq: OSI as fpi}
\begin{aligned}
 X_{t+1} &= \frac{1}{m} \left(\sum_{i=1}^k A_i^{} Y_t^{} A_i^\top \right)^{-1}, \\
 Y_{t+1} &= \frac{1}{n} \left(\sum_{i=1}^k A_i^\top X_{t+1}^{} A_i^{} \right)^{-1}.
\end{aligned}
\end{equation}
This algorithm is called the \emph{operator Sinkhorn iteration (OSI)} due to its analogy to the Sinkhorn algorithm for matrix scaling~\cite{Sinkhorn1967}, and was considered (in an equivalent form discussed below) in seminal work by Gurvits~\cite{Gurvits2004}. In a different context, the same algorithm was actually proposed as \emph{MLE Algorithm} in an earlier work by Dutilleul~\cite{Dutilleul1999} and is also known as the \emph{flip-flop} algorithm~\cite{Brown2001}.

It can be shown that the OSI is well defined (i.e.~the inverses always exist) if the CP maps satisfy $\Phi(I_n) \succ 0$ and $\Phi^*(I_m) \succ 0$. Moreover, using fundamental results from nonlinear Perron--Frobenius theory, it can be shown~\cite{Georgiou2015,Idel2016,SU24} that a sufficient condition for OSI to converge for any initialization to a (unique up to scalar scaling indeterminacy) fixed point $(X_*,Y_*) \in \mathcal C_m \times \mathcal C_n$ of~\eqref{eq: fp formulation} is that the CP map~$\Phi$ is \emph{positivity improving}, which means that it actually maps positive \emph{semidefinite} matrices to positive \emph{definite} ones. Moreover, in this case the asymptotic rate of convergence is linear. In fact, in the original work of Gurvits~\cite{Gurvits2004}, convergence of the OSI has been proven under even weaker conditions (which are in particular satisfied if $\Phi$ is positivity improving), but only with an algebraic $1/\sqrt{t}$ rate.

From the practical perspective, the computation of the inverses in~\eqref{eq: OSI as fpi} is usually based on Cholesky-type decompositions such as
\[
 \sum_{i=1}^k A_i^{} Y_t^{} A_i^\top = C_t^{} C_t^\top
\]
and then setting
\[
 L_{t+1} = \frac{1}{\sqrt{m}} C_t^{-1}, \qquad X_{t+1} = L_{t+1}^\top L_{t+1}^{}.
\]
Likewise, in the next half step
\[
 \sum_{i=1}^k A_i^\top X_{t+1}^{} A_i^{} = D_t^{} D_t^\top
\]
and
\[
 R_{t+1} = \frac{1}{\sqrt{n}} D_t^{-1}, \qquad Y_{t+1} = R_{t+1}^\top R_{t+1}^{}.
\]
In this way, a sequence of scaling matrices $(L_t,R_t)$ is generated while executing the algorithm, and if OSI converges in the sense that $(X_t,Y_t) = (L_t^\top L_t^{}, R_t^\top R_t^{} )$ converges to a fixed point $(X_*,Y_*) \in \mathcal C_m \times \mathcal C_n$ of~\eqref{eq: fp formulation}, then every accumulation point $(L_*,R_*)$ of the sequence $(L_t,R_t)$ provides a solution for the operator scaling problem~\eqref{eq: OS}. For reference, the algorithm is displayed as \Cref{algo: FPI}. Note that the corresponding generated sequences $(X_t,Y_t)$ do not depend on the particular choice of the Cholesky-type decompositions, so there is a certain freedom of choice. The most reasonable choices are to take as $C_t$ and $D_t$ either the classic lower-triangular Cholesky factors, which can be computed and inverted quite efficiently, or to take $C_t$ and $D_t$ as positive definite square roots, which has advantages for the theoretical analysis and will deliver symmetric positive definite scaling matrices $L_*$ and $R_*$. Moreover, with such a systematic choice there will be usually only one accumulation point $(L_*,R_*)$ of the sequence $(L_t,R_t)$.

\begin{figure*}[t]
\begin{minipage}[t]{.49\linewidth}
\begin{algorithm}[H]
\caption{\small Alternating Fixed-point Iteration}
\label{algo: FPI}
\begin{algorithmic}[1]
%\Require
\small
\Input
Matrices $A_1,\dots,A_k \in \mathbb R^{m \times n}$, invertible $R_0$.
%\Output
% \State Initialize scaling factors $L_0 = I_m$, $R_0 = I_n$.
\For
{$t = 0,1,2,\dots$}
    \State
    Update $L$ factor:
    \[
    \sum_{i=1}^k A_{i}^{} R_t^\top R_t^{} A_{i}^\top = C_t^{} C_t^\top, \quad     L_{t+1} = \frac{1}{\sqrt{m}} C_t^{-1}.
    \]
    \State
    Update $R$ factor:
    \[
    \sum_{i=1}^k A_{i}^{\top} L_{t+1}^\top L_{t+1}^{} A_{i}^{} = D_t^{} D_t^\top,
    \quad
    R_{t+1} = \frac{1}{\sqrt{n}} D_t^{-1}.
    \]
\EndFor
\end{algorithmic}
\end{algorithm}
\end{minipage}
\hfill
\begin{minipage}[t]{.49\linewidth}
\begin{algorithm}[H]
\caption{\small Original OSI}
\label{algo: OSI}
\begin{algorithmic}[1]
\small
%\Require
\Input
Initial matrices $\bar A_{1,0} \coloneqq A_1,\dots, \bar A_{k,0} \coloneqq A_k$.
%\Output
\For
{$t = 0,1,2,\dots$}
    \State
    Update $L$ factor:
    \[
    \sum_{i=1}^k \bar A_{t,i}^{} \bar A_{t,i}^\top = \bar C_t^{} \bar C_t^\top,
    \quad
    \bar L_{t+1} = \frac{1}{\sqrt{m}} \bar C_t^{-1}.
    \]
    \State
    Update $R$ factor:
    \[
    \sum_{i=1}^k \bar A_{t,i}^{\top} \bar L_{t+1}^\top \bar L_{t+1}^{} \bar A_{t,i}^{} = \bar D_t \bar D_t^{\top},
    \quad
    \bar R_{t+1} = \frac{1}{\sqrt{n}} \bar D_t^{-1}.
    \]
    \State
    Absorb factors by setting
    \[
    \bar A_{t+1,i} = \bar L_{t+1}^{} \bar A_{t,i} \bar R_{t+1}^\top.
    \]
\EndFor
\end{algorithmic}
\end{algorithm}
\end{minipage}
\end{figure*}

We now discuss an important equivalent version of OSI related to the main subject of our work. As originally presented in~\cite{Gurvits2004}, the OSI can be formulated in a way where the scaling matrices $L_t$ and $R_t$ are immediately absorbed into the matrices $A_1,\dots,A_k$ by replacing them with $L_t^{} A_i^{} R_t^\top$, and then continuing the procedure with these new matrices. This way of formulating the iteration somewhat puts more emphasis on the resulting scaled $A_i$ instead of the scaling matrices $L$ and $R$. We display this version as \Cref{algo: OSI}. Note that this algorithm implicitly takes $R_0 = I_m$ as initial guess, essentially without loss in generality. It can be shown by an easy induction~\cite{SU24} that if also $R_0 = I_m$ in \Cref{algo: FPI}, then both variants are equivalent up to orthogonal transformation in the sense that the sequences of scaled matrices $\bar A_{t,i}$ produced by \Cref{algo: OSI} are related to the sequences $L_t$ and $R_t$ from \Cref{algo: FPI} through
\[
 L_t^{} A_i^{} R_t^\top = P_t^{} \bar A_{t,i}^{} Q_t^\top
\]
for some orthogonal matrices $P_t \in \mathbb R^{m \times m}$ and $Q_t \in \mathbb R^{n \times n}$ (which arise since the choice of Cholesky decomposition has not been specified). In particular, if $(X_t,Y_t)$ in \Cref{algo: FPI} converges to a fixed point $(X_*,Y_*) \in \mathcal C_m \times \mathcal C_n$, then for the same data the $\bar A_{t,i}$ in \Cref{algo: OSI} converge to matrices $\bar A_i$ satisfying~\eqref{eq: OS}. If the scaling matrices themselves are required, they can also be obtained from \Cref{algo: OSI} through recursive book-keeping by noting that
\begin{equation}\label{eq: recursive scaling}
\bar A_{t,i} = \bar L_t^{} \cdots \bar L_1^{} A_i^{} \bar R_1^\top \cdots \bar R_t^\top.
\end{equation}

The two version of OSI are mathematically essentially equivalent, but can behave differently numerically though. In~\Cref{algo: FPI}, the matrices $\sum_{i=1}^k A_{i}^{} R_t^\top R_t^{} A_{i}^\top$ and $\sum_{i=1}^k A_{i}^{\top} L_{t+1}^\top L_{t+1}^{} A_{i}^{}$ of which Cholesky decompositions need to be computed are supposed to converge to $m X_*^{-1}$ and $n Y_*^{-1}$, respectively, which may become difficult if one of these limits is ill-conditioned. In fact, already forming these matrices can be numerically unstable, if the initial matrices $A_1,\dots,A_k$ are ill-conditioned. The achievable accuracy of~\eqref{eq: fp formulation} can hence be affected by these factors. Examples of this effect can be seen in the numerical experiments in \Cref{sec: experiment Hilbert matrices}. In contrast, in~\Cref{algo: OSI} often a regularizing effect is observed, because in this algorithm one has to compute Cholesky-type decompositions of $\sum_{i=1}^k \bar A_{t,i}^{} \bar A_{t,i}^\top$ and $\sum_{i=1}^k \bar A_{t,i}^{\top} \bar L_{t+1}^\top \bar L_{t+1}^{} \bar A_{t,i}^{}$ which both should converge to multiples of identities and hence become better and better conditioned. Therefore, the original version of OSI as in \Cref{algo: OSI} can be considered as numerically more stable, which is also confirmed in the experiments. The goal of the present work is to develop accelerated versions of \Cref{algo: OSI} based on overrelaxation, which can significantly increase the convergence while keeping the same numerical stability.

\subsection*{Successive overrelaxation for operator scaling}

In~\cite{SU24} several methods for accelerating OSI by means of (nonlinear) successive overrelaxation (SOR) have been proposed, building on earlier works on SOR methods for classic matrix scaling~\cite{Thibault2021,Lehmann2022}. The SOR methods considered in~\cite{SU24} are based on the alternating fixed-point formulation as given in \Cref{algo: FPI}. Recall that this version of OSI is based on the formulation~\eqref{eq: OSI as fpi} of OSI as alternating fixed point iteration. Following a classical pattern for such alternating methods, the main idea of SOR is to combine current and previous iterates in the algorithm using a relaxation parameter $\omega > 0$ for improving the convergence rate. In the case of the iteration~\eqref{eq: OSI as fpi} a complication arises from the fact that the iterates $X_t$ and $Y_t$ must be symmetric positive definite matrices, which renders a naive linear relaxation such as taking $(1-\omega) X_t + \omega X_{t+1}$ feasible only for values $0 \le \omega \le 1$. However, the case of interest in overrelaxation is $\omega > 1$.

To adapt to the geometry of positive definite matrices, it has been proposed in~\cite{SU24} to instead either apply overrelaxation in the linear space of lower triangular Cholesky factors, or using geodesics in the hyperbolic space of positive definite matrices equipped with the Hilbert metric. The first variant is straightforward and displayed as \Cref{algo: FPI-Cholesky-SOR}. This algorithm comes at almost no additional cost compared to \Cref{algo: FPI} (restricted to lower-triangular Cholesky decomposition). When $\omega > 1$, there is a theoretical risk that $L_{t+1}$ or $R_{t+1}$ become non-invertible, but in practice this should rarely pose problems. The geodesic version of SOR is a little bit more sophisticated, as it requires the geodesic curve in the Hilbert metric connecting positive definite matrices $X$ and $\tilde X$, which is given by~\cite{Lemmens2012}
\begin{align}\label{eq:sharp}
X \#_\omega \tilde X \coloneqq X^{\frac{1}{2}}(X^{-\frac{1}{2}}\tilde X X^{-\frac{1}{2}})^\omega X^{\frac{1}{2}}, \qquad 0 \le \omega \le 1,
\end{align}
but actually well defined for all $\omega \in \mathbb R$. Based on this observation, the geodesic SOR version of OSI can be formulated as \Cref{algo: FPI-Geodesic-SOR} which operates directly on the matrices $X_t = L_t^\top L_t^{}$ and $Y_t = R_t^\top R_t^{}$.

\begin{figure*}[t]
\begin{minipage}[t]{.49\linewidth}
\begin{algorithm}[H]
\small
\caption{\small Alternating Fixed-Point Iteration with Cholesky Factor SOR (FPI-Cholesky-SOR)}
\label{algo: FPI-Cholesky-SOR}
\begin{algorithmic}[1]
%\Require
\Input
Matrices $A_1,\dots,A_k \in \mathbb R^{m \times n}$, invertible matrices $L_0, R_0$, and relaxation parameter $\omega >0$.
%\Output
\For
{$t = 0,1,2,\dots$}
    \State
    Update $L$ factor:
    \begin{align*}
        &\sum_{i=1}^k A_{i}^{} R_t^\top R_t^{} A_{i}^\top = C_t^{} C_t^\top, 
    \\
        &L_{t+1} = (1-\omega) L_t + \frac{\omega}{\sqrt{m}} C_t^{-1}.
    \end{align*}
    with $C_t$ lower triangular.
    \State
    Update $R$ factor:
    \begin{align*}
    &\sum_{i=1}^k A_{i}^{\top} L_{t+1}^\top L_{t+1}^{} A_{i}^{} = D_t^{} D_t^\top, \\
    &R_{t+1} = (1-\omega ) R_t + \frac{\omega}{\sqrt{n}} D_t^{-1}.
    \end{align*}
    with $D_t$ lower triangular.
\EndFor
\end{algorithmic}
\end{algorithm}
\end{minipage} 
\hfill
%%%%%%%%%%%%%%%%%%%%%%%%%%%%%%%%%%%%%%%%%%%%%%%%%%%%
\begin{minipage}[t]{.49\linewidth}
\begin{algorithm}[H]
\small
\caption{\small OSI with Cholesky Factor SOR\\(OSI-Cholesky-SOR)}
\label{algo: OSI-Cholesky-SOR}
\begin{algorithmic}[1]
%\Require
\Input
Initial matrices $A_{1,0} \coloneqq A_1,\dots,A_{k,0} \coloneqq A_k$.
%\Output
\For
{$t = 0,1,2,\dots$}
    \State
    Update $L$ factor:
    \begin{align*}
    &\sum_{i=1}^k \bar A_{t,i}^{} \bar A_{t,i}^\top = \bar C_t^{} \bar C_t^\top,  \\
    &\bar L_{t+1} = (1-\omega) I_m + \frac{\omega}{\sqrt{m}} \bar C_t^{-1}.
    \end{align*}
    with $C_t$ lower triangular.
    \State
    Update $R$ factor:
    \begin{align*}
    &\sum_{i=1}^k \bar A_{t,i}^{\top} \bar L_{t+1}^\top \bar L_{t+1}^{} \bar A_{t,i}^{} = \bar D_t \bar D_t^{\top}, \\
    &\bar R_{t+1} = (1-\omega) I_n + \frac{\omega}{\sqrt{n}} \bar D_t^{-1}.
    \end{align*}
    with $D_t$ lower triangular.
    \State
    Absorb factors by setting
    \[
    \bar A_{t+1,i} = \bar L_{t+1}^{} \bar A_{t,i} \bar R_{t+1}^\top.
    \]
\EndFor
\end{algorithmic}
\end{algorithm}
\end{minipage}
\end{figure*}

\begin{figure*}[t]
\begin{minipage}[t]{.49\linewidth}
\begin{algorithm}[H]
\small
\caption{Alternating Fixed-point Iteration with Geodesic SOR (FPI-Geodesic-SOR)}
\label{algo: FPI-Geodesic-SOR}
\begin{algorithmic}[1]
%\Require
\Input
Matrices $A_1,\dots,A_k \in \mathbb R^{m \times n}$, initial matrices $X_0, Y_0$, and relaxation parameter $\omega >0$.
%\Output
\For
{$t = 0,1,2,\dots$}
    \State
    Update $X$:
    \begin{align*}
    X_{t+1} = X_t \#_\omega \left[\frac{1}{m} \left(\sum_{i=1}^k A_i Y_t A_i^\top \right)^{-1} \right].
    \end{align*}
    \State
    Update $Y$:
    \begin{align*}
     Y_{t+1} = Y_t \#_\omega \left[ \frac{1}{n} \left(\sum_{i=1}^k A_i^\top X_{t+1} A_i \right)^{-1} \right].
    \end{align*}
\EndFor
\end{algorithmic}
\end{algorithm}
\end{minipage}
\hfill
%%%%%%%%%%%%%%%%%%%%%%%%%%%%%%%%%%%%%%%%%%%%    
\begin{minipage}[t]{.49\linewidth}
\begin{algorithm}[H]
\small
\caption{OSI with geodesic SOR\\(OSI-Geodesic-SOR)}
\label{algo: OSI-Geodesic-SOR}
\begin{algorithmic}[1]
%\Require
\Input
Initial matrices $A_{1,0} \coloneqq A_1,\dots,A_{k,0} \coloneqq A_k$.
%\Output
\For
{$t = 0,1,2,\dots$}
    \State
    Update $L$ factor:
    \[
    \bar L_{t+1} = m^{-\omega/2} \left( \sum_{i=1}^k \bar A_{t,i}^{} \bar A_{t,i}^\top\right)^{-\omega/2}.
    \]
    \State
    Update $R$ factor:
    \[
    \bar R_{t+1} = n^{-\omega/2} \left(\sum_{i=1}^k \bar A_{t,i}^{\top} \bar L_{t+1}^\top \bar L_{t+1}^{} \bar A_{t,i}^{}\right)^{-\omega/2}.
    \]
    \State
    Absorb factors by setting
    \[
    \bar A_{t+1,i} = \bar L_{t+1}^{} \bar A_{t,i} \bar R_{t+1}^\top.
    \]
    % {\color{red} State version with book keeping?}
\EndFor
\end{algorithmic}
\end{algorithm}
\end{minipage}
\end{figure*}

Both \Cref{algo: FPI-Cholesky-SOR} and \Cref{algo: FPI-Geodesic-SOR} apply the idea of overrelaxation to the alternating fixed point iteration~\eqref{eq: OSI as fpi}. This has the advantage that some standard arguments from the convergence analysis of such algorithms can be applied, at least in modified form. Note that both SOR variants preserve the fixed points~\eqref{eq: fp formulation}, and as shown in~\cite{SU24} their linearizations actually coincide in these fixed points and take the form of a linear SOR iteration. In other words, in a neighborhood of a solution both algorithms are equal in first-order. Furthermore, it is then possible to characterize the ``optimal'' parameter $\omega$ for achieving the fastest \emph{asymptotic} local convergence rate. Unfortunately, to the best of our knowledge, the \emph{global} convergence properties of OSI with SOR remain currently unclear.

However, as also observed in~\cite{SU24}, both \Cref{algo: FPI-Cholesky-SOR} and~\ref{algo: FPI-Geodesic-SOR} can suffer from ill-conditioning and limited numerical accuracy in the same way as the standard version in \Cref{algo: FPI}: for instance when the matrices $A_1,\dots,A_k$ are ill-conditioned. This is insofar expectable, as they both also do not benefit from a regularizing effect of an on-the-fly scaling as in \Cref{algo: OSI}. In this short note, we present equivalent versions of the SOR algorithms from~\cite{SU24} which apply the scaling matrices already during the iteration and are hence natural SOR generalizations of the original OSI as in~\Cref{algo: OSI}. These algorithms are displayed as \Cref{algo: OSI-Cholesky-SOR} and \Cref{algo: OSI-Geodesic-SOR} and will be derived in the next section. As we will demonstrate in the numerical experiments in \Cref{sec: numerical experiments}, the proposed SOR versions can significantly accelerate the original SOR and solve the operator scaling problem to high accuracy even in ill-conditioned scenarios.

\section{SOR variants of OSI}\label{sec: SOR-OSI}

Starting with the initial matrices $A_i \eqcolon \bar A_{0,i}$ ($i=1,\dots,k$), the original OSI (Algorithm~\ref{algo: OSI}) produces a recursive sequence
\[
\bar A_{t+1,i} = \bar L_{t+1}^{} \bar A_{t,i}^{} \bar R_{t+1}^\top, \qquad i=1,\dots,k,
\]
of successively scaled matrices which are supposed to converge to a solution $\bar A_i = L_* A_i R_*$ ($i=1,\dots,k$) of the operator scaling problem~\eqref{eq: OS}. The formulation of SOR variants of \Cref{algo: OSI} is based on the simple observation that after applying the scaling, one obtains new matrices $A_{t+1,i}$ and should implicitly assume in the next iteration that the ``previous'' scaling matrices are just identity matrices. Apart from that the same SOR update rules as in the fixed point versions (\Cref{algo: FPI-Cholesky-SOR} and \Cref{algo: FPI-Geodesic-SOR}) can be used for suitably combining the standard updates with identities. An alternative viewpoint for the same idea is that every iteration of \Cref{algo: OSI} looks like an iteration of \Cref{algo: FPI} with $L_t = I_m$ and $R_t = I_n$, and with different matrices $A_{i,t}$ instead of $A_i$. Hence SOR update formulas can be modified accordingly. In the following, we derive the two corresponding algorithms for the additive Cholesky factor SOR update rule as used in \Cref{algo: FPI-Cholesky-SOR} and the geodesic SOR rule as in \Cref{algo: FPI-Geodesic-SOR}, respectively, and prove that they are indeed equivalent to their alternating fixed point version in the same way as \Cref{algo: OSI} is equivalent to \Cref{algo: FPI}.

\subsection{OSI with Cholesky factor SOR}

In this variant, we assume that in the original OSI always standard Cholesky decompositions without pivoting (i.e.~with lower-triangular factors) are used. We then apply linear SOR directly to the computed inverse Cholesky factors, implicitly assuming the ``previous'' Cholesky factors have been identity matrices. The resulting algorithm is displayed as \Cref{algo: OSI-Cholesky-SOR}, and should be compared with \Cref{algo: FPI-Cholesky-SOR}. We next show that these algorithms are mathematically equivalent. To show this, we first make the following observation on Cholesky decompositions which follows immediately from the fact that the Cholesky (upper-triangular) decomposition of (real) symmetric positive definite matrices is unique.

\begin{lemma}
    Let $A$ be positive definite, $L_1$ be lower triangular so that $L_1^{} L_1^\top = A$ and $L_2$ are lower triangular with positive entries. Then $(L_2L_1)^{}(L_2L_1)^\top = L_2^{} A L_2^\top$ is the Cholesky decomposition of $L_2^{} A L_2^\top$.    
\end{lemma}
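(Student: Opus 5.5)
The plan is to verify directly that $L_2L_1$ has the two properties that define a Cholesky factor of $L_2^{}AL_2^\top$, and then to appeal to uniqueness of the Cholesky decomposition. I read the hypotheses as saying that $L_1$ is the Cholesky factor of $A$, so lower triangular with positive diagonal entries, and that $L_2$ is lower triangular with positive diagonal entries. Those are the entries the statement has in mind when it says ``positive entries''.

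First, the identity itself is a direct computation:
\[
(L_2L_1)(L_2L_1)^\top = L_2^{}L_1^{}L_1^\top L_2^\top = L_2^{}AL_2^\top .
\]
I will also note that $L_2$ is invertible, because its diagonal entries are nonzero. Since $A$ is positive definite, $L_2^{}AL_2^\top$ is then symmetric positive definite, and so it has a unique Cholesky decomposition.

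Second, I check the structure of $L_2L_1$. A product of two lower triangular matrices is lower triangular, because $(L_2L_1)_{ij}=\sum_{j\le \ell\le i}(L_2)_{i\ell}(L_1)_{\ell j}$ vanishes whenever $i<j$. On the diagonal only the term $\ell=i$ survives, which gives $(L_2L_1)_{ii}=(L_2)_{ii}(L_1)_{ii}>0$. So $L_2L_1$ is lower triangular with positive diagonal.

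Finally, uniqueness of the Cholesky factor with positive diagonal shows that $L_2L_1$ is the Cholesky factor of $L_2^{}AL_2^\top$. The one step that needs care is interpretive. Uniqueness holds only under the normalization that the diagonal is positive, so I have to make sure both $L_1$ and $L_2$ carry that normalization. If $L_1$ were allowed arbitrary diagonal signs, then $L_2L_1$ would still be a lower triangular factorization of $L_2^{}AL_2^\top$, but it would agree with the Cholesky factor only up to a diagonal sign matrix.
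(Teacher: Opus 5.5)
Your proof is correct and follows the paper's approach. The paper only says the lemma follows immediately from uniqueness of the Cholesky decomposition, and you carry out the checks that this appeal needs: the product identity, lower triangularity of $L_2L_1$, and the positive diagonal $(L_2)_{ii}(L_1)_{ii}$, along with your apt remark that $L_1$ must also have positive diagonal, which the statement leaves implicit.
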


We now proceed to the main statement.

\begin{proposition}\label{prop: equivalence Cholesky}
    Let $(L_t,R_t)$ denote the iterates of \Cref{algo: FPI-Cholesky-SOR} when initialized with $L_0 = I_m$ and $R_0 = I_n$. Accordingly, let $(\bar L_t, \bar R_t, \bar A_{i,t})$ be the iterates of \Cref{algo: OSI-Cholesky-SOR} for the same data $A_1,\dots,A_k$ and same $\omega > 0$. Then for all $t \ge 0$ it holds $L_{t+1} = \bar L_{t+1} L_t $, $R_{t+1} = \bar R_{t+1} R_t $,
    and
    \[
    \bar A_{t,i} = L_t^{} A_i^{} R_t^\top
    \]
    for $i=1,\dots,k$.
\end{proposition}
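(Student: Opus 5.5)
Induction on $t$, carrying the joint invariant $\bar A_{t,i} = L_t^{} A_i^{} R_t^\top$ for all $i$. It holds at $t=0$ because $L_0 = I_m$, $R_0 = I_n$ and $\bar A_{0,i} = A_i$. The two factor relations $L_{t+1} = \bar L_{t+1} L_t$ and $R_{t+1} = \bar R_{t+1} R_t$ come out of the inductive step. Once we have them, the invariant propagates: $\bar A_{t+1,i} = \bar L_{t+1}\bar A_{t,i}\bar R_{t+1}^\top = \bar L_{t+1}L_t A_i R_t^\top \bar R_{t+1}^\top = L_{t+1}A_iR_{t+1}^\top$. Throughout, the iterates $L_t$, $R_t$ are lower triangular, being built from identities and inverses of lower-triangular Cholesky factors. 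I assume, as the algorithms implicitly do, that they stay invertible with positive diagonal.

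\textbf{The $L$-update.} Assume the invariant at step $t$. In \Cref{algo: FPI-Cholesky-SOR} we have $\sum_i A_i R_t^\top R_t A_i^\top = C_t C_t^\top$, while in \Cref{algo: OSI-Cholesky-SOR}
\[
\sum_i \bar A_{t,i}\bar A_{t,i}^\top = L_t\Big(\sum_i A_i R_t^\top R_t A_i^\top\Big)L_t^\top = (L_tC_t)(L_tC_t)^\top .
\]
The preceding lemma, applied with $L_2 = L_t$, shows that this is the Cholesky factorization, so $\bar C_t = L_t C_t$ and $\bar C_t^{-1} = C_t^{-1}L_t^{-1}$. Then
\[
\bar L_{t+1}L_t = (1-\omega)L_t + \tfrac{\omega}{\sqrt m}C_t^{-1}L_t^{-1}L_t = (1-\omega)L_t + \tfrac{\omega}{\sqrt m}C_t^{-1} = L_{t+1}.
\]

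\textbf{The $R$-update.} Using $L_{t+1} = \bar L_{t+1}L_t$ together with the invariant,
\[
\sum_i \bar A_{t,i}^\top \bar L_{t+1}^\top \bar L_{t+1}\bar A_{t,i} = R_t\Big(\sum_i A_i^\top L_{t+1}^\top L_{t+1}A_i\Big)R_t^\top = (R_tD_t)(R_tD_t)^\top .
\]
The lemma again gives $\bar D_t = R_tD_t$. The same computation as for $L$ then yields $\bar R_{t+1}R_t = (1-\omega)R_t + \tfrac{\omega}{\sqrt n}D_t^{-1} = R_{t+1}$, which closes the induction.

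\textbf{Main obstacle.} The delicate step is invoking uniqueness of the Cholesky factor, i.e.\ the lemma's requirement that $L_t$ and $R_t$ have positive diagonal. The factors $C_t^{-1}$ and $D_t^{-1}$ do have positive diagonal. For $\omega>1$, however, $(1-\omega)L_t + \tfrac{\omega}{\sqrt m}C_t^{-1}$ could in principle produce nonpositive diagonal entries. I would handle this by carrying positivity of the diagonals as an extra inductive hypothesis, justified by the same standing assumption under which the algorithms are well defined. Alternatively, I would note that positive diagonal is only needed to identify $\bar C_t$ with $L_tC_t$. If the diagonal had mixed signs, one would instead compare up to a diagonal sign matrix and verify consistency.
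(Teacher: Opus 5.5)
Your proof is correct and follows the paper's own argument: induction on $\bar A_{t,i} = L_t A_i R_t^\top$, the Cholesky-uniqueness lemma to identify $\bar C_t = L_tC_t$ and $\bar D_t = R_tD_t$, then direct algebra for $L_{t+1} = \bar L_{t+1}L_t$ and $R_{t+1} = \bar R_{t+1}R_t$. The positive-diagonal hypothesis on $L_t,R_t$ that you make explicit is used only tacitly in the paper, and it is a genuine extra assumption rather than a consequence of well-definedness, so your sign-matrix fallback cannot rescue the statement: for $m=n=k=1$, $A_1=4$, $\omega=3/2$ one gets $L_1=-1/8$ and then $\bar L_2L_1=-7/80\neq 17/80=L_2$.
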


\begin{proof}
    The proof is by induction. In the case $t=0$ there is nothing to show. Assume now the statements are correct for some $t \ge 0$. Since then
    \[
    \bar C_t^{} \bar C_t^\top
    = \sum_{i=1}^k  \bar A_{t,i}^{}\bar A_{t,i}^\top
    = \sum_{i=1}^k L_t^{} A_{i}^{} R_t^\top R_t^{} A_{i}^\top L_t^\top 
    =  L_t^{}C_t^{} C_t^\top L_t^\top,
    \]
    the above lemma implies $\bar C_t = L_tC_t$. Therefore,
    \[
    L_{t+1} = (1-\omega) L_t +\frac\omega{\sqrt m} C_t^{-1}= (1-\omega) L_t +\frac\omega{\sqrt m }  (L_t C_t)^{-1} L_t
    = ((1-\omega) I_m +\frac\omega{\sqrt m }  \bar C_t^{-1}) L_t = \bar L_{t+1} L_t.
    \]
    Similarly, by
    \[
        \bar D_t^{} \bar D_t^{\top}
        =
        \sum_{i=1}^k \bar A_{t,i}^{\top} \bar L_{t+1}^\top \bar L_{t+1}^{} \bar A_{t,i}^{} 
        =
    \sum_{i=1}^k R_t^{} A_{i}^{\top} L_{t+1}^\top L_{t+1}^{} A_{i}^{} R_t^\top = R_t D_t^{} D_t^\top R_t^\top,
    \]
    we have $\bar D_t = R_t^{} D_t^{}$, and thus
    \[
    R_{t+1}
    =
    (1-\omega)R_t +\frac\omega{\sqrt{n}} D_t^{-1}
    =
    ((1-\omega)I_n +\frac\omega{\sqrt{n}} \bar D_t^{-1}) R_t = \bar R_{t+1} R_t.
    \]
    Finally, we have
    \[
        \bar A_{t+1,i} = \bar L_{t+1}^{} \bar A_{t,i}^{} \bar R_{t+1}^\top 
        = \bar L_{t+1}^{} L_{t}^{} A_i^{}  R_{t}^\top R_{t+1}^\top = L_{t+1}^{} A_{t,i}^{} R_{t+1}^\top,
    \]
    completing the induction step.
\end{proof}

\begin{remark}\label{remark1}
    The use of an ``additive'' SOR update rule for combining old and new iterates in Algorithms~\ref{algo: FPI-Cholesky-SOR} and~\ref{algo: FPI-Geodesic-SOR} is motivated by the fact that the triangular Cholesky decomposition provides a (local) parametrization of symmetric positive definite matrices through a linear space of lower triangular matrices. In principle, one could ignore this underlying property and use instead the matrix square root for computing $C_t,D_t$ and $\bar C_t, \bar D_t$ in both algorithms, respectively. We experimented with such a version of \Cref{algo: OSI-Cholesky-SOR} using $\bar C_t = (\sum_{i=1}^k \bar A_{t,i}^{} \bar A_{t,i}^\top )^{1/2}$ and similar for $\bar D_t$. It showed comparable performance and numerical stability. An interesting observation, however, is that (according to the experiments) this version does not seem to be equivalent to a corresponding square root version of \Cref{algo: FPI-Cholesky-SOR}. Apart from that also note that one cannot choose \emph{any} Cholesky-type decomposition in \Cref{algo: OSI-Cholesky-SOR}. To preserve the fixed point property of the solution, the choice needs to be such that the ``Cholesky factor'' of the identity matrix should be the identity matrix itself. This basically leaves the classic lower triangular Cholesky factor and the matrix square root as only reasonable choices.
\end{remark}

\subsection{OSI with geodesic SOR}

Using the same idea as before, we could also absorb the scaling factors in the geodesic version of \Cref{algo: FPI-Geodesic-SOR}. This then just means that in the next iterations, we find ourselves in the situation from the initial iterate, just with modified matrices $A_i$. This suggests that we can combine the geodesic SOR with OSI by connecting the standard update with the identity matrices. For example, for updating the $L$ factor in the $t$-th step this would mean to take
\begin{equation}\label{eq: derivation Geodesic OSI}
\bar X_{t+1} = I_m \#_\omega \left [\frac{1}{m} \left(\sum_{i=1}^k \bar A_{t,i}^{} \bar A_{t,i}^\top \right)^{-1} \right] = \left( \frac1m \sum_{i=1}^k \bar A_{t,i}^{} \bar A_{t,i}^\top \right)^{-\omega} 
\end{equation}
and then set $\bar L_{t+1}$ to be a Cholesky factor of this matrix. If we now agree to take the matrix square root as Cholesky factor, then we can immediately write
\[
\bar L_{t+1} = \left( \frac1m \sum_{i=1}^k \bar A_{t,i}^{} \bar A_{t,i}^\top \right)^{-\omega/2}
\]
to obtain the new left scaling matrix. In a similar way we can obtain a direct formula for $\bar R_{t+1}$. The resulting iteration is denoted as \Cref{algo: OSI-Geodesic-SOR} and should be compared with \Cref{algo: FPI-Geodesic-SOR}. An obvious difference between both is that \Cref{algo: FPI-Geodesic-SOR} operates on the variables $X = L^\top L$ and $Y = R^\top R$, while \Cref{algo: OSI-Geodesic-SOR} operates on $L$ and $R$ factors directly, which are needed here because immediate scaling is applied. In exact arithmetic, both algorithms are equivalent in the following sense.

\begin{proposition}

Let $(X_t,Y_t)$ denote the iterates of \Cref{algo: FPI-Geodesic-SOR} when initialized with $(X_0, Y_0) = (I_m, I_n)$. Accordingly, let $(\bar L_t, \bar R_t, \bar A_{t,i})$ be the iterates of \Cref{algo: OSI-Geodesic-SOR} for the same data $A_1,\dots,A_k$ and same $\omega > 0$. Define
\[
L_{t+1} = \bar L_{t+1} L_t, \qquad R_{t+1} = \bar R_{t+1} R_t
\]
recursively with $L_0 = I_m$ and $R_0 = I_n$. Then for all $t \ge 0$ it holds 
\[
\bar A_{t,i} = L_t A_i R_t
\]
for $i=1,\dots,k$ and 
\[
X_t = L_t^\top L_t^{} , \qquad Y_t = R_t^\top R_t^{}.
\]
\end{proposition}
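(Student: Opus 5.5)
The plan is an induction on $t$. The key tool is the congruence invariance of the weighted geometric mean $\#_\omega$ from \eqref{eq:sharp}. Throughout I read the scaling relation as $\bar A_{t,i} = L_t^{} A_i^{} R_t^\top$, consistent with the absorption step $\bar A_{t+1,i} = \bar L_{t+1}\bar A_{t,i}\bar R_{t+1}^\top$ and with \Cref{prop: equivalence Cholesky}.

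For $t=0$ all claims are trivial, since $L_0=I_m$, $R_0=I_n$, $X_0=I_m$ and $Y_0=I_n$. For the induction step I assume $\bar A_{t,i} = L_t A_i R_t^\top$, $X_t = L_t^\top L_t$ and $Y_t = R_t^\top R_t$.

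\textbf{Step 1: congruence invariance.} For every invertible $M$, all $X,Z\in\mathcal C_m$ and all real $\omega$,
\[
M^\top (X\#_\omega Z) M = (M^\top X M)\#_\omega (M^\top Z M).
\]
I would prove this via the representation $X\#_\omega Z = X (X^{-1}Z)^\omega$. Here $X^{-1}Z$ is similar to $X^{-1/2}ZX^{-1/2}\succ 0$, so it has positive eigenvalues and its real power is a well-defined primary matrix function. Primary matrix functions commute with similarity, so $f(S^{-1}BS)=S^{-1}f(B)S$. Hence
\[
\bigl(M^{-1}X^{-1}M^{-\top}M^\top Z M\bigr)^\omega = M^{-1}(X^{-1}Z)^\omega M,
\]
and the identity follows. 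This is the step I expect to need the most care, because $M=L_t$ is not orthogonal, so the naive square-root formula does not transform visibly.

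\textbf{Step 2: the $X$-update.} Let $Z=\frac1m(\sum_i A_iY_tA_i^\top)^{-1}$. Apply Step 1 with $M=L_t^{-1}$ and $X=X_t=L_t^\top L_t$, so that $M^\top X_t M=I_m$. Using $Y_t=R_t^\top R_t$,
\[
M^\top Z M=\frac1m\Bigl(L_t\sum_i A_iR_t^\top R_tA_i^\top L_t^\top\Bigr)^{-1}=\frac1m\Bigl(\sum_i\bar A_{t,i}\bar A_{t,i}^\top\Bigr)^{-1}.
\]
Since $I\#_\omega W=W^\omega$, this gives
\[
X_{t+1}=L_t^\top\Bigl(\frac1m\sum_i\bar A_{t,i}\bar A_{t,i}^\top\Bigr)^{-\omega}L_t=L_t^\top\bar L_{t+1}^2L_t=L_{t+1}^\top L_{t+1},
\]
where the last equality uses that $\bar L_{t+1}$ is symmetric. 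The scalar factor $m^{-\omega/2}$ is harmless.

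\textbf{Step 3: the $Y$-update.} The same argument with $M=R_t^{-1}$ works. The needed identity is
\[
\sum_i\bar A_{t,i}^\top\bar L_{t+1}^\top\bar L_{t+1}\bar A_{t,i}=R_t\Bigl(\sum_iA_i^\top L_{t+1}^\top L_{t+1}A_i\Bigr)R_t^\top=R_t\Bigl(\sum_iA_i^\top X_{t+1}A_i\Bigr)R_t^\top,
\]
which uses $L_{t+1}=\bar L_{t+1}L_t$ and Step 2. It yields $Y_{t+1}=R_t^\top\bar R_{t+1}^2R_t=R_{t+1}^\top R_{t+1}$.

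\textbf{Step 4: closing the induction.} Finally,
\[
\bar A_{t+1,i}=\bar L_{t+1}L_tA_iR_t^\top\bar R_{t+1}^\top=L_{t+1}A_iR_{t+1}^\top,
\]
which completes the induction step.
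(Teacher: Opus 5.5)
Your proof is correct. You are also right to read the scaling relation as $\bar A_{t,i} = L_t^{} A_i^{} R_t^\top$. The untransposed form in the statement is a typo: for $t \ge 2$ the matrix $R_t = \bar R_t \cdots \bar R_1$ is in general not symmetric, whereas the transposed form is what \eqref{eq: recursive scaling} gives and what the paper's own proof uses in both half-steps.

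The induction skeleton matches the paper's, but you obtain the key invariance differently. The paper stays inside the square-root formula \eqref{eq:sharp}. Since $L_t^\top L_t^{} = X_t$, the matrix $Q = L_t^{} X_t^{-1/2}$ is orthogonal, so the paper only needs the orthogonal equivariance $f(Q^\top A Q) = Q^\top f(A) Q$ for functions of symmetric matrices. You instead prove full congruence invariance $M^\top (X \#_\omega Z) M = (M^\top X M) \#_\omega (M^\top Z M)$ for every invertible $M$. You get it from $X \#_\omega Z = X (X^{-1}Z)^\omega$ and similarity invariance of primary matrix functions, then apply it with $M = L_t^{-1}$ (resp.\ $R_t^{-1}$). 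This moves $X_t$ (resp.\ $Y_t$) to the identity, where $I \#_\omega W = W^\omega$ is exactly the OSI update.

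The paper's route is more elementary and never leaves symmetric matrices, but it is tailored to the specific factor $L_t$. Yours needs the justification you correctly supply for real powers of the nonsymmetric but diagonalizable, positive-spectrum matrix $X^{-1}Z$. In return it isolates a standard structural property of the weighted geometric mean that explains the proposition conceptually: \Cref{algo: FPI-Geodesic-SOR} is congruence-equivariant, and \Cref{algo: OSI-Geodesic-SOR} is the same iteration written in coordinates where the current iterate is the identity.

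The two key facts are close relatives. Proving your Step~1 via the polar decomposition $X^{1/2}M = U (M^\top X M)^{1/2}$ reduces it to the paper's orthogonal-conjugation argument, with $U^\top = L_t^{} X_t^{-1/2}$ in the case at hand.
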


\begin{proof}
The equation for $\bar A_{t,i}$ holds by construction, see~\eqref{eq: recursive scaling}. For $t= 0$, the statements for $X_t$ and $Y_t$ are trivial since all involved matrices are the identity matrices. We proceed by induction and assume the statements are true for some $t \ge 0$. Note that 
\[
 \frac{1}{m} \left(\sum_{i=1}^k A_i^{} Y_t^{} A_i^\top \right)^{-1}
 =
 \frac{1}{m}
 \left(\sum_{i=1}^k  L_t^{-1}\bar A_{t,i}^{}  \bar A_{t,i}^\top L_t^{-\top}\right)^{-1}
 =
 L_t^{\top}\bar L_{t+1}^{2/\omega} L_t^{}.
\]
Thus, \Cref{algo: FPI-Geodesic-SOR} constructs
\[
    X_{t+1} = X_t \#_\omega (L_t^{\top}\bar L_{t+1}^{2/\omega} L_t^{}) 
    = X_t^{\frac{1}{2}} (X_t^{-\frac{1}{2}} L_t^{\top}\bar L_{t+1}^{2/\omega} L_t^{} X_t^{-\frac{1}{2}})^\omega X_t^{\frac{1}{2}}.
\]
Now since $L_t^\top L_t^{} = X_t$ by induction hypothesis, we have $X_t^{-\frac12}L_t^\top L_t^{} X_t^{-\frac12} = I_m$, so $L_t X^{-\frac12}$ is an orthogonal matrix. Since analytic functions on symmetric matrices satisfy $f(Q^\top A Q) =Q^\top f(A) Q$ for orthogonal $Q$, the above expression for $X_{t+1}$ hence simplifies to
\[
X_{t+1} = X_t^{\frac{1}{2}} X_t^{-\frac{1}{2}} L_t^{\top} (\bar L_{t+1}^{2/\omega})^\omega L_t^{} X_t^{-\frac{1}{2}} X_t^{\frac{1}{2}}
= L^\top_t \bar L_{t+1}^2 L_t^{} = L_{t+1}^\top L_{t+1}^{},
\]
since $\bar L_{t+1}$ is symmetric. Similarly, using that $ R_t^{} Y_t^{-\frac{1}{2}}$ is orthogonal, one now also verifies
\begin{align*}
    Y_{t+1} &= Y_t ^{\frac{1}{2}} \left[ Y_t ^{-\frac{1}{2}} 
    \frac{1}{n} \left(\sum_{i=1}^k A_i^\top X_{t+1}^{} A_i^{} \right)^{-1}
    Y_t ^{-\frac{1}{2}} \right]^{\omega}Y_t ^{\frac{1}{2}} \\
&=
Y_t ^{\frac{1}{2}} \left[
Y_t ^{-\frac{1}{2}} R_t^\top \frac{1}{n}
\left(\sum_{i=1}^k 
\bar A_{t,i}^\top \bar L_{t+1}^\top \bar L_{t+1}^{} \bar A_{t,i}^{}  \right)^{-1}
    R_t^{} Y_t^{-\frac{1}{2}}\right]^{\omega}
    Y_t^{\frac{1}{2}}\\
&=
 R_t^\top n^{-\omega} \left(\sum_{i=1}^k 
\bar A_{t,i}^\top \bar L_{t+1}^\top
    \bar L_{t+1}^{} \bar A_{t,i}^{} 
    \right)^{-\omega}
    R_t^{} = R_t^\top \bar R_{t+1}^2 R_t^\top = R_{t+1}^\top R_{t+1}^{},
\end{align*}
concluding the induction step.
\end{proof}

\begin{remark}
Similar to \Cref{remark1} one may ask whether \Cref{algo: OSI-Geodesic-SOR} necessarily requires to use the matrix square root as ``Cholesky factor''. In general, one may think of using $\bar L_{t+1} = m^{-\omega/2} \bar C_t^{-\omega}$, where~$\bar C_t$ is any matrix satisfying $\bar C_t^{} \bar C_t^\top = \sum_{i=1}^k \bar A_{t,i}^{} \bar A_{t,i}^\top$ (and analogously for the $R$ step). We experimented with a version of \Cref{algo: OSI-Geodesic-SOR} using classic lower triangular Cholesky factors, which is insofar interesting as lower-triangular matrices are also closed under taking rational powers. In particular, choosing the triangular Cholesky factor in \Cref{algo: OSI-Geodesic-SOR} ensures the fixed point property of the identity matrix. However, we were not able to relate such a version of \Cref{algo: OSI-Geodesic-SOR} to the \Cref{algo: FPI-Geodesic-SOR}. Regardless of that, we found that such a version of \Cref{algo: OSI-Geodesic-SOR} behaves quite similar in regard of number of iterations and numerical stability, but was somewhat slower in execution time compared to the matrix square root version, at least when using off-the-shelf functions for rational powers of nonsymmetric functions.
\end{remark}

\section{Numerical experiments}\label{sec: numerical experiments}

We present experimental results for the solution of the operator scaling problem~\eqref{eq: OS} in special, ill-conditioned instances. The main goal is to demonstrate that the proposed SOR versions of OSI can achieve much higher numerical accuracy in such cases compared to their their mathematically equivalent FPI counterparts. Specifically, the comparison of the following pairwise equivalent methods is of interest:

\begin{itemize}
    \item FPI (Algorithm~\ref{algo: FPI}) and OSI (Algorithm~\ref{algo: OSI}),
    \item FPI-Cholesky-SOR (Algorithm~\ref{algo: FPI-Cholesky-SOR}) and OSI-Cholesky-SOR (Algorithm~\ref{algo: OSI-Cholesky-SOR}),
    \item FPI-Geodesic-SOR (Algorithm~\ref{algo: FPI-Geodesic-SOR}) and OSI-Geodesic-SOR (Algorithm~\ref{algo: OSI-Geodesic-SOR}).
\end{itemize}

In addition to that, we will also observe differences between the different OSI methods, e.g.~in runtime. However, while we tried to achieve comparably efficient implementations of all algorithms, this was not our focus, and so the results on runtime may only indicate a relative trend.

\subsection{Experimental Setup}

All experiments were run on a Linux server with AMD EPYC 9654 CPU with 192 logical cores and \num{755}GB memory using Python 3.10.14, NumPy~1.26.4, and SciPy~1.10.1. The initialization of all algorithms were set to the identity matrices, i.e. $L_0 = I_m$ and $R_0 = I_n$ for the FPI methods and $\bar A_{0,i} = A_i$ for the OSI methods. We use the following error measure called ``grad norm'':
\[    
    \err(\bA_t) = \sqrt{\norm*{\sum_{i=1}^k A_{t,i}^{} A_{t, i}^\top - \frac{1}{m} I_m}_F^2 + \norm*{\sum_{i=1}^k A_{t,i}^\top A_{t, i}^{} - \frac{1}{n} I_n}_F^2}.
\]
We denote the error at iteration $t$ by $\err_t$. The overrelaxation in all SOR methods was activated after $p=5$ iterations with $\omega$ computed with the following ``optimal'' formula (see~\cite{SU24}):
\begin{align*}
    \omega = \frac{2}{1 + \sqrt{1- \hat\beta^2}} > 1,
\end{align*}
where
\begin{align*}
    {\hat \beta}^2 = \sqrt{\frac{\err_p}{\err_{p-2}}}
\end{align*}
is an estimate of the local convergence rate of the corresponding method without SOR.

\subsection{Ill-conditioned problem with Hilbert matrices}\label{sec: experiment Hilbert matrices}

As a first numerical test, we reproduce the experimental result from~\cite{SU24} in which numerical instabilities of the fixed-point based SOR methods have been observed. We set $A_i = Q_i H \in \R^{n \times n}$, $i=1,\dots,k$, where~$H$ is the $n \times n$ Hilbert matrix and $Q_i$ is a uniform random orthogonal matrix.

In \Cref{fig: Hilbert matrix}, we show the experimental outcome for the case $n=5$ and $k=7$. The condition number of every $A_i$ is then about~\num{476600}. We can clearly see the difference between the FPI (fixed-point based) methods and the OSI methods which apply scaling during the iteration; the former getting stuck at error measure around $10^{-6}$. In addition, we see a (small) accelerating effect in all the SOR methods. 
Note that the SOR methods automatically detected $\omega \approx 1.096$. 
Notably, OSI-Cholesky-SOR and OSI-Geodesic-SOR have about the same computational time in our implementation.
On the other hand, OSI-Geodesic-SOR is much faster than its FPI couterpart (FPI-Geodesic-SOR) because the former operates on better-conditioned matrices and hence saving runtime in the inner geodesic computation.

\begin{figure}[t]
    \centering
    \subcaptionbox{Plot of the gradient norm against iterations.\label{fig:Hilbert_iter}}{\includegraphics[width = .49\textwidth]{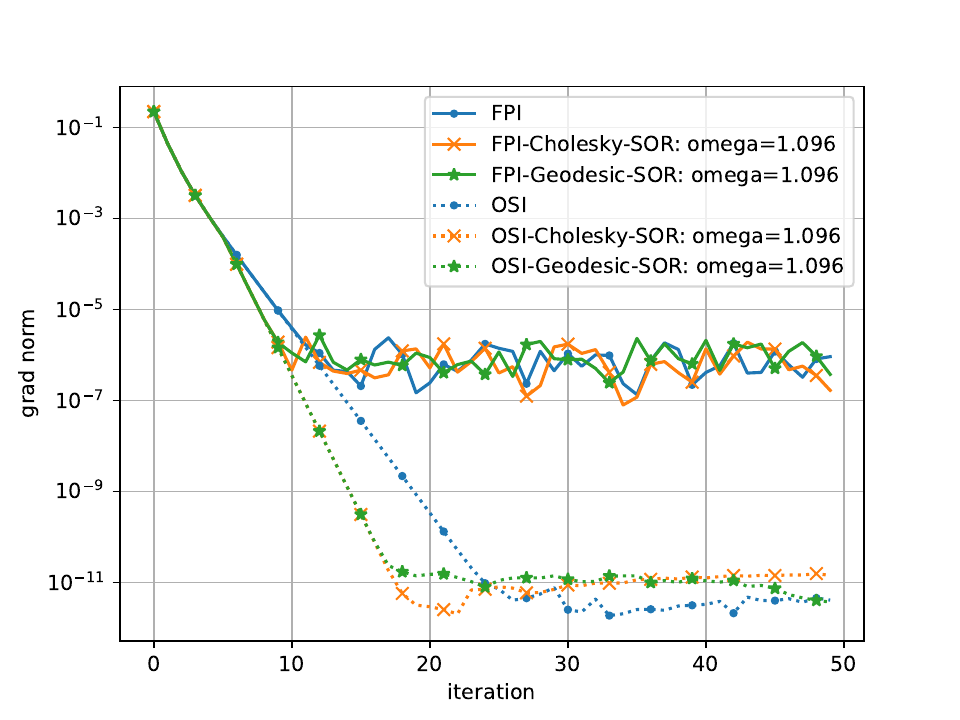}}
    \subcaptionbox{Plot of the gradient norm against runtime.\label{fig:Hilbert_time}}{\includegraphics[width = .49\textwidth]{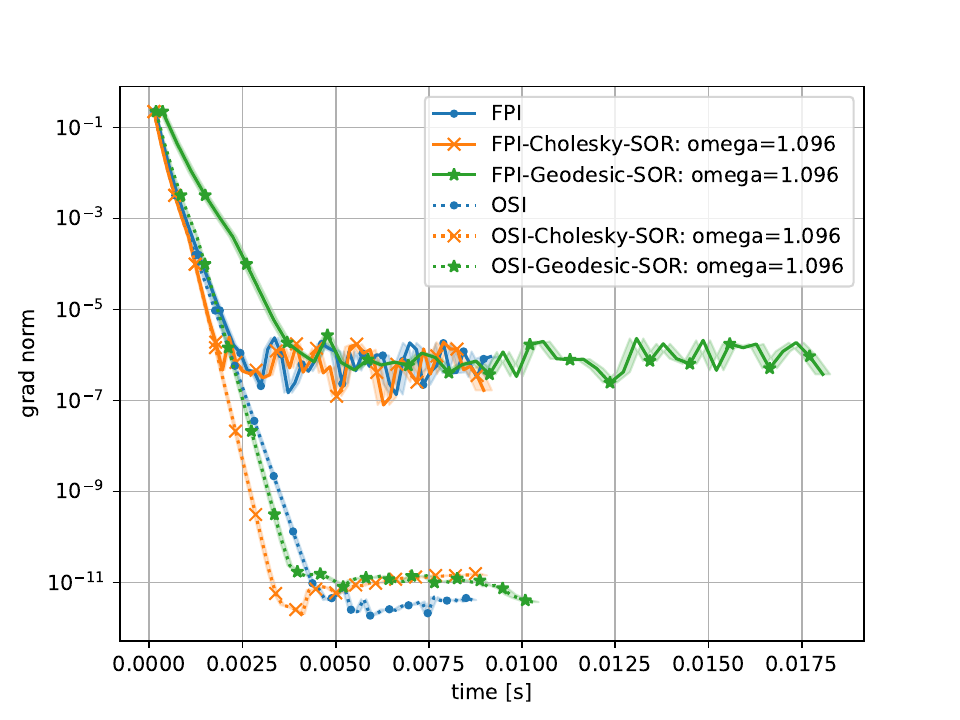}}
\caption{The results of the experiment with Hilbert matrices. The FPI methods are shown in solid lines, whereas the OSI methods are shown in dashed lines. In the runtime plot (\subref{fig:Hilbert_time}), the thick line and shaded region represent the mean and the standard deviation of 200 runs on the same instance. In both plots, the FPI methods suffer from numerical instability and get stuck at around $10^{-6}$, while the OSI methods can achieve much smaller error.}
\label{fig: Hilbert matrix}
\end{figure}

\subsection{An example of a frame scaling instance}

From the above experiment, it can be seen that the proposed OSI-SOR methods can deal with ill-conditioned data matrices $A_i$. However, at the same time the benefit of using overrelaxation is not very large as standard OSI already converges very fast. In the following, we present an ill-conditioned instance of frame scaling in which standard OSI is slow and can be significantly accelerated using SOR. 

Recall that in frame scaling, we are given vectors $x_1, \dots, x_k \in \R^n$, and the goal is to find an invertible $n \times n$ matrix $P$ and scalars $\alpha_i \in \R$ ($i = 1, \dots, k$) such that 
\begin{align*}
    \sum_{i=1}^k \alpha_i^2(Px_i)(Px_i)^\top = I_n,
    \qquad 
    \alpha_i^2 (Px_i)^\top (Px_i) = \frac{n}{k} \quad (i=1, \dots, k).
\end{align*}
It can be seen that frame scaling is a special case of operator scaling where $A_i = x_i^{} e_i^\top$ are rank-one.

In this experiment, we set up a frame scaling instance as follows. Let $Q, P$ be random orthogonal matrices of size $k, n$, respectively. Let $D$ be an $n \times n$ diagonal matrix with its diagonal entries evenly spaced in the interval $[\kappa^{-1}, 1]$, where $\kappa > 1$ is a parameter. Let $B = Q' D P^\top$, where $Q'$ denotes the $k \times n$ submatrix of $Q$ containing its first $n$ columns. Note that the condition number of $B$ is roughly proportional to~$\kappa$. Then, for $i = 1, \dots, k$, let $x_i$ be (the transpose of) the $i$th row of $B$ normalized to Euclidean norm one. In the experiment, we set $n=50$, $k=55$, and $\kappa=10^7$. We run the methods for $200$ iterations and SOR is activated at $p=20$th iteration.

\begin{figure}[t]
\centering
\subcaptionbox{Plot of the gradient norm against iterations.\label{fig:frame_iter}}{\includegraphics[width = .66\textwidth]{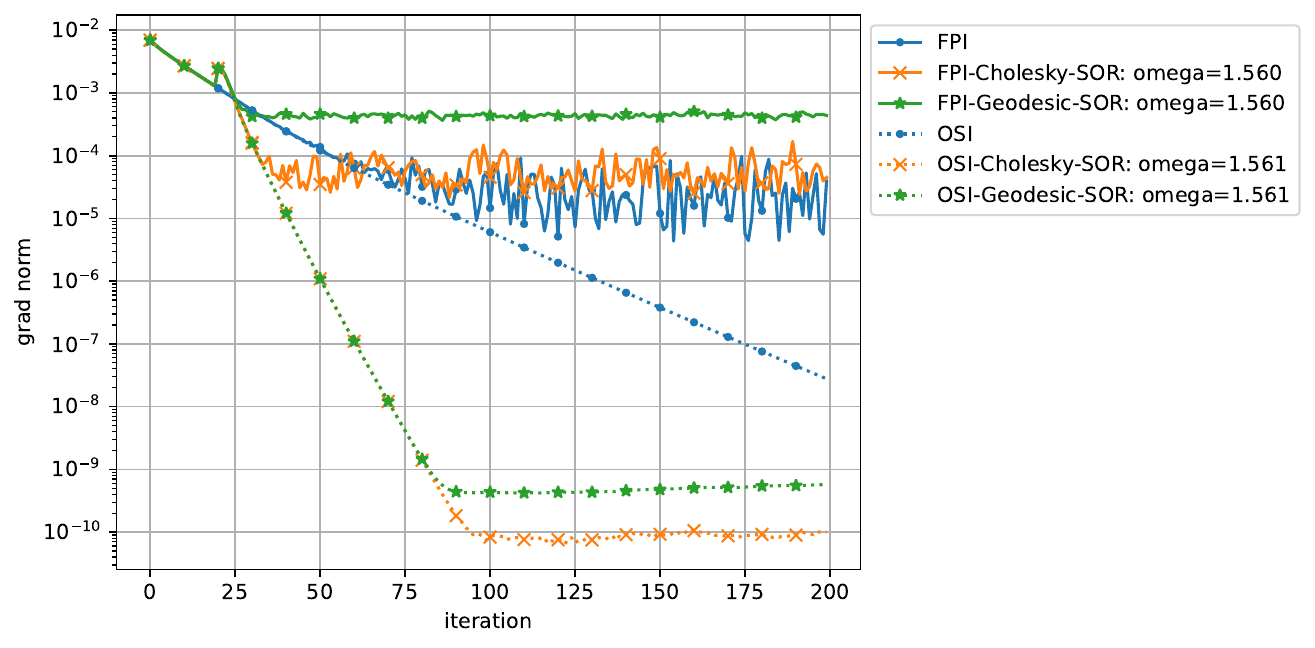}}
\subcaptionbox{Plot of the gradient norm against runtime.\label{fig:frame_time}}{\includegraphics[width = .66\textwidth]{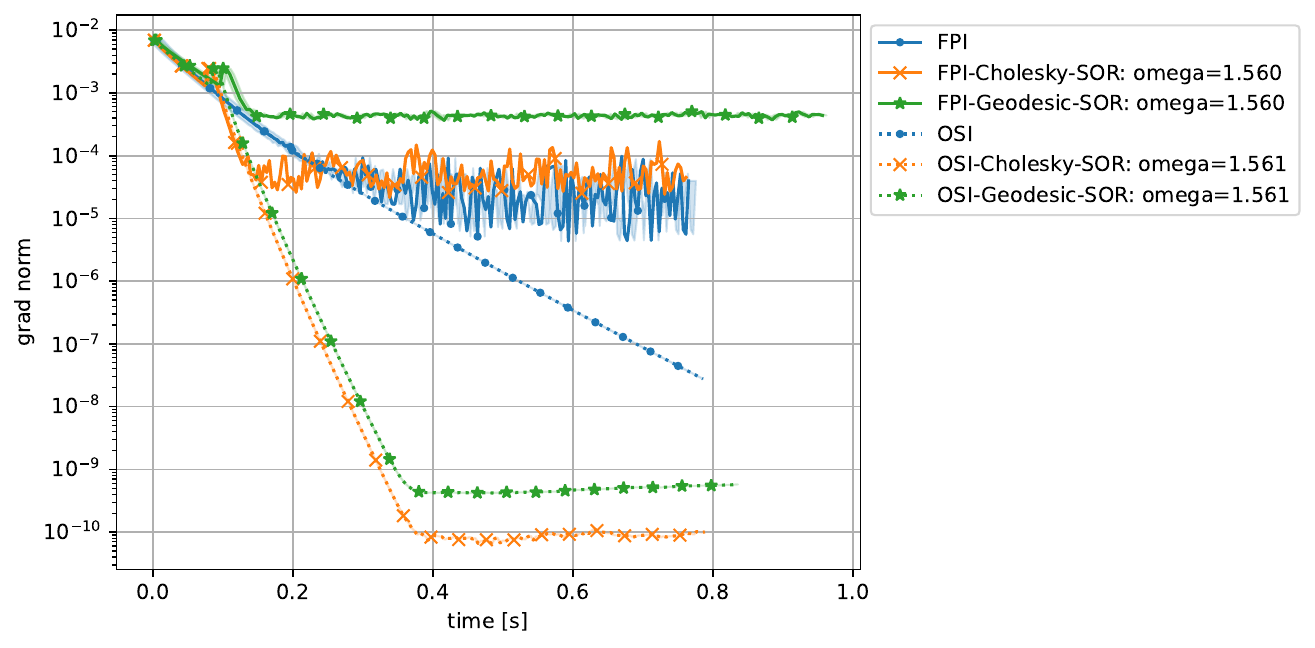}}
\caption{The results of the experiment with frame scaling. The FPI methods are shown in solid lines, whereas the OSI methods are shown in dashed lines. In the runtime plot (\subref{fig:frame_time}), the thick line and shaded region represent the mean and the standard deviation of 10 runs on the same instance, although the shaded region in not visible in the plot because the variance is small. In both plots, the FPI methods suffer from numerical instability and get stuck at around $10^{-3}$ to $10^{-5}$, while the OSI methods can achieve much smaller error. Moreover, SOR significantly accelerate the OSI methods, while only slightly accelerating the FPI methods before getting stuck.}
\label{fig: frame}
\end{figure}

The result is shown in \Cref{fig: frame}. As is expected, the FPI methods stagnated in error around $10^{-3}$ to $10^{-5}$ due to the ill-conditioning, whereas the OSI methods achieved smaller error around $10^{-9}$ to $10^{-10}$. Furthermore, we observe a significant acceleration of the OSI-SOR methods over OSI. The FPI-SOR also accelerated FPI, but got stuck due to numerical error almost immediately after SOR was activated.

\subsection{An extreme case}

We repeat the previous experiment with smaller $k = 52$ (we set $n = 50$, same as in the previous experiment).
Additionally, we replace the first matrix $A_1$ with $e_1^{} e_1^\top$ (a single entry $1$ in the top left corner). In this case, we observe that standard OSI is incredibly slow, while the SOR methods still work (see \Cref{fig: frame_twist}). Note that since the SOR methods converge fast, it is likely that $\omega$ has been almost correctly estimated (the bumps indicate it has been slightly overestimated), which in turn indicates that the ultimate convergence rate of standard OSI is indeed that slow.

It should be noted that in the case of frame scaling, neither map $\Phi(Y) = \sum_{i=1}^k A_i^{} Y A_i^\top$, nor $\Phi^*(X) = \sum_{i=1}^k A_i^\top X A_i^{}$ are positivity improving, as $\Phi(e_i^{} e_i^\top) = x_i^{}x_i^\top$ has rank one and $\Phi^*(y^{} y^\top) = \sum_{i =1}^k (x_i^\top y)^2 e_i^{} e_i^\top$ is not full rank for any $y$ orthogonal to any of the vectors $x_1, \ldots, x_k$. Thus, global convergence rates such as \cite[Theorem~6] {Georgiou2015} are not applicable, and we believe this may be related to the very slow convergence rates of standard OSI observed in this experiment.

\begin{figure}[t]
\centering
\subcaptionbox{Plot of the gradient norm against iterations.\label{fig:frame-twist_iter}}{\includegraphics[width = .66\textwidth]{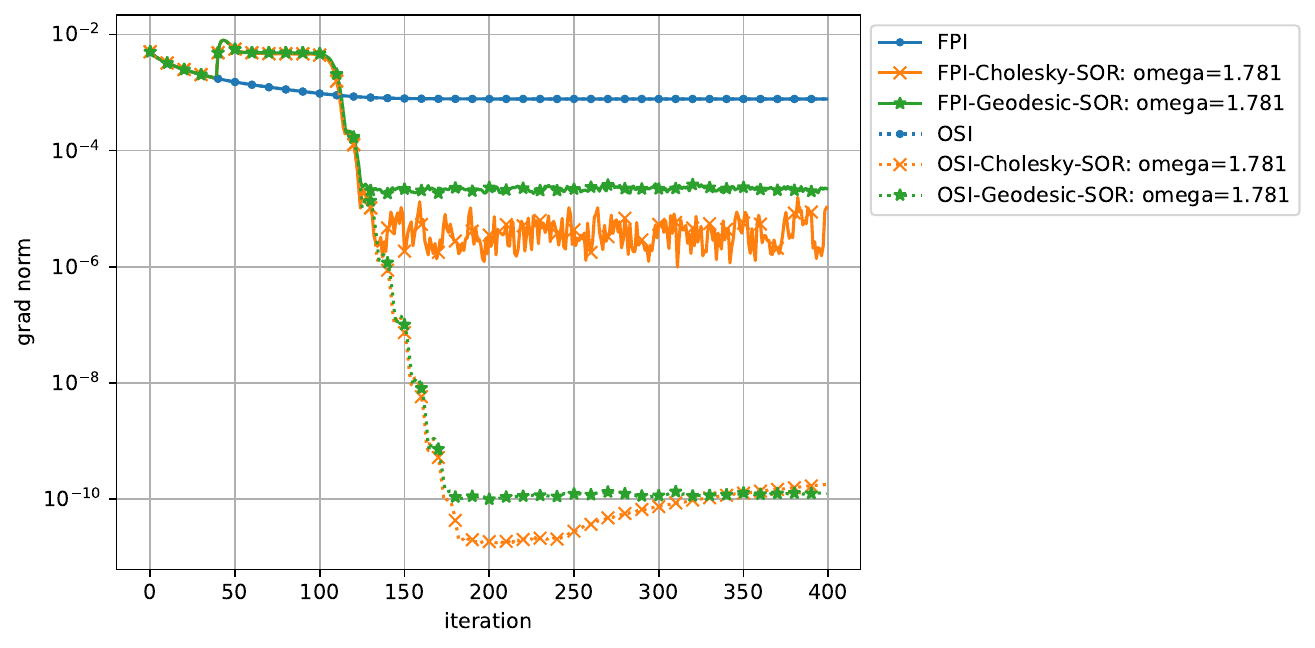}}
\subcaptionbox{Plot of the gradient norm against runtime.\label{fig:frame-twist_time}}{\includegraphics[width = .66\textwidth]{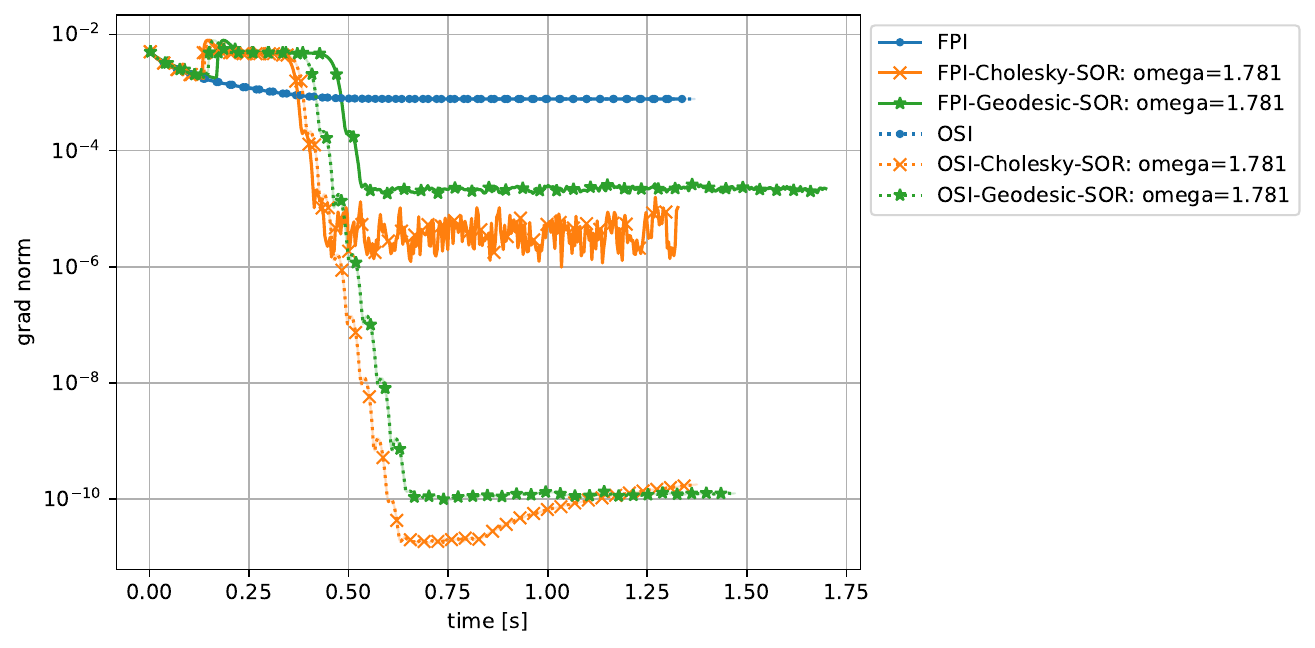}}
\caption{The results of the experiment with the extreme setting. The FPI methods are shown in solid lines, whereas the OSI methods are shown in dashed lines. In the runtime plot (\subref{fig:frame-twist_time}), the thick line and shaded region represent the mean and the standard deviation of 10 runs on the same instance, although the shaded region in not visible in the plot because the variance is small. FPI and OSI (without SOR) are very slow. The FPI methods with SOR are slightly better but get stuck due to numerical instability. On the other hand, the OSI-SOR methods exhibit significant acceleration as well as better convergence error.}
\label{fig: frame_twist}
\end{figure}

\paragraph{Acknowledgements}

The work of A.U.~was supported by the Deutsche Forschungs\-gemeinschaft (DFG, German Research Foundation) – Projektnummer 506561557. A part of this work has been conducted while A.U. was staying at ISM as a visiting professor in 2025. The support of ISM is gratefully acknowledged. T.S.~was supported by JPSP KAKENHI Grant Numbers JP19K20212 and JP24K21315, and JST, PRESTO Grant Number JPMJPR24K5, Japan. The work of H.E.~was supported by Deutsche Forschungs\-gemeinschaft (DFG, German Research Foundation) – Projektnummer 501389786.

\small
\bibliographystyle{alpha}
\bibliography{main}

\end{document}